\documentclass[journal]{IEEEtran}

\usepackage{cite}
\usepackage{amsmath}

\usepackage{amsthm}
\usepackage{amsfonts}
\usepackage[pdfstartview=XYZ,
bookmarks=true,
colorlinks=true,
linkcolor=blue,
urlcolor=blue,
citecolor=blue,
pdftex,
bookmarks=true,
linktocpage=true, 
hyperindex=true
]{hyperref}
\usepackage{orcidlink}
\newtheorem{theorem}{Theorem}
\newtheorem{lemma}{Lemma}
\theoremstyle{definition}
\newtheorem{corollary}{Corollary}
\newtheorem{example}{Example}
\usepackage{amssymb}
\usepackage{fancyhdr}
\theoremstyle{plain}

\usepackage{enumitem}
\newcounter{Ax}
\newcommand{\itemA}{%
    \addtocounter{Ax}{1}
    \item[(A\theAx)]}

\hyphenation{op-tical net-works semi-conduc-tor}

\fancypagestyle{firststyle}
{
   \fancyhf{}
    \fancyhead[C]{\textcolor{blue}{Accepted version of the article published in IEEE Signal Processing Letters. DOI:10.1109/LSP.2024.3396660. Available at: http://ieeexplore.ieee.org}}
   \vspace{-1em} \fancyfoot[C]{\textcolor{blue}{\scriptsize \textcopyright 2024 IEEE. Personal use of this material is permitted. Permission from IEEE must be obtained for all other uses, in any current or future media, including reprinting/republishing this material for advertising or promotional purposes, creating new collective works, for resale or redistribution to servers or lists, or reuse of any copyrighted component of this work in other works.}
   }
}

\begin{document}

\title{On the convergence of Block Majorization-Minimization algorithms on the Grassmann Manifold}

\author{Carlos~Alejandro~Lopez \orcidlink{0000-0002-2216-2786},~\IEEEmembership{Student Member,~IEEE,} and
        Jaume~Riba \orcidlink{0000-0002-5515-8169},~\IEEEmembership{Senior Member,~IEEE}%
\thanks{
    This work was supported by project MAYTE (PID2022-136512OB-C21 financed by MCIN/AEI/10.13039/501100011033 and by "ERDF A way of making Europe", EU), by project RODIN (PID2019-105717RB-C22/AEI/10.13039/501100011033), by the grant 2021 SGR 01033, and the fellowship 2023 FI-3 00155 by Generalitat de Catalunya and the European Social Fund.}
    \thanks{The authors are with the Signal Processing and Communications Group (SPCOM), Dept. de Teoria del Senyal i Comunicacions, Universitat Polit{\`e}cnica de Catalunya (UPC), 08034 Barcelona, Spain (e-mail: carlos.alejandro.lopez@upc.edu; jaume.riba@upc.edu).}
    }

%
%

\markboth{IEEE SIGNAL PROCESSING LETTERS, 2024}%
{Lopez \MakeLowercase{\textit{et al.}}: On the convergence of Block Majorization-Minimization algorithms on the Grassmann Manifold}

\maketitle
\thispagestyle{firststyle}

\begin{abstract}
    The Majorization-Minimization (MM) framework is widely used to derive efficient algorithms for specific problems that require the optimization of a cost function (which can be convex or not). It is based on a sequential optimization of a surrogate function over closed convex sets. A natural extension of this framework incorporates ideas of Block Coordinate Descent (BCD) algorithms into the MM framework, also known as block MM. The rationale behind the block extension is to partition the optimization variables into several independent blocks, to obtain a surrogate for each block, and to optimize the surrogate of each block cyclically. However, known convergence proofs of the block MM are only valid under the assumption that the constraint sets are closed and convex. Hence, the global convergence of the block MM is not ensured for non-convex sets by classical proofs, which is needed in iterative schemes that naturally emerge in a wide range of subspace-based signal processing applications. For this purpose, the aim of this letter is to review the convergence proof of the block MM and extend it for blocks constrained in the Grassmann manifold.
\end{abstract}

\begin{IEEEkeywords}
    Non-convex Optimization, Majorization-Minimization, Riemannian Optimization, Grassmann Manifold, Geodesically Convex Optimization, Convergence.
\end{IEEEkeywords}

\IEEEpeerreviewmaketitle

    \section{Introduction}
        \IEEEPARstart{T}{h} Majorization-Minimization (MM) framework is an optimization paradigm that is capable to meet the requirements of emerging applications \cite{big_data_review}, which often necessitate that the algorithms are capable to handle multimodal datasets, provide integrity to the final solution, be computationally fast, and scale efficiently. This framework is based on the iterative minimization of a surrogate function (also referred to as the majorant \cite{expanded_theoretical_treatment_mm}) of the original cost. This surrogate is a tight upperbound of the cost function at the current iterate and it is often selected such that the variables are separable (for parallel computing) and that it is easy to find an optimal solution of the surrogate problem \cite{mm_review_palomar}. The MM framework is often considered to be a generalization of other well-known algorithmic frameworks such as the Expectation-Maximization \cite{MM_alternative_EM, EM_vs_MM}, the Concave-Convex Procedure \cite{concave-convex-procedure}, and the Proximal algorithms \cite{proximal_algorithms}, among others.

In relation to the separability of the optimization variables, a common approach is to combine the Block Coordinate Descent (BCD) procedure \cite{block_coordinate_descent} with the MM framework. This approach, also termed as block MM in the literature, consists on the partition of the variables into independent blocks with the aim of employing the MM framework in each block. The motivation behind the block MM framework is that, in some cases, the original cost function can be better approximated using several blocks \cite{mm_review_palomar}. However, classical convergence theorems of block MM algorithms assume that each block is constrained in a closed convex set \cite{unified_convergence_analysis, expanded_theoretical_treatment_mm} and thus they do not ensure the global convergence for non-convex sets. 

Examples of widely used non-convex sets in optimization are the Stiefel and Grassmann manifolds \cite{orth_grassmann}. Those manifolds appear in problems with orthogonality constraints, which are commonly faced in signal processing applications, such as Principal Component Analysis (PCA) \cite{PCA_FA, robust_pca_partial_subspace}, Subspace Learning \cite{grouse, apst, fast_stable_st}, Matrix Factorization \cite{orthogonal_nmf, riemannian_perspective_matrix} and Non-Coherent Communications \cite{non_coherent_comms_grassmann}, to name a few. 

Although the convergence of the MM framework on Stiefel (tightly related to the Grassmann manifold) constrained variables is proven in \cite{mm_stiefel}, no rigorous proofs exist for the block MM case. For this reason, the main goal of this letter is to revisit the convergence proof of the block MM in \cite{unified_convergence_analysis} and to extend it for blocks constrained in the Grassmann manifold. Despite the non-convexity of the Grassmanian, we show that the compactness, the geodesic convexity of subsets and the geodesic quasiconvexity \cite{geodesically_quasiconvex} of the surrogate functions are sufficient conditions to ensure the convergence of a block MM algorithm involving the Grassmannian. Examples of the theoretical need for this convergence proof are found in any algorithm that combines PCA with the estimation of additional parameters or in the blind deconvolution problem \cite{dimensionality_reduction_binary_data_natural_parameters, Truncated_power_method_sparse_eigenvalue, Nonconvex_approach_exact_efficient_multichannel, Dual_principal_component_pursuit, global_geometry_sphere_constrained_sparse, short_sparse_deconvolution}. Additionally, the theoretical study in this paper serves as an alternative way to proof the convergence of the Proximal Gradient method on the Grassmann manifold (see \cite{proximal_gradient_stiefel_manifold} for more insights).

    \section{Mathematical Preliminaries and Definitions}
        We next review some of the necessary concepts involved in the convergence proof without the necessity to delve into differential geometry. We refer to \cite{orth_grassmann} for an in-depth treatment of the Grassmann manifold geometry for optimization algorithms and to \cite{convex_geodesic_overview, accelerated_geodesically_convex_opt} for an overview on geodesically convex optimization.

\subsection{Grassmann manifold definitions and concepts}

\subsubsection{Grassmann manifold}
\label{subsubsec:Grassmann_manifold}

The Grassmann manifold, $\text{Gr}(N, D)$, is the set of $D$-dimensional subspaces in $\mathbb{R}^N$. We consider that each point is represented by the following equivalence class:
\begin{equation}
    \label{eqn:eq_class}
    [\mathbf{X}] = \{ \mathbf{X}\mathbf{R} \in \mathbb{R}^{N \times D}: \mathbf{X}^T\mathbf{X} = \mathbf{I}_D, \mathbf{R} \in \text{O}(D) \},
\end{equation}
where $\mathbf{X} \in \mathbb{R}^{N \times D}$ and $\text{O}(D)$ denotes the set of matrices that satisfy $\mathbf{R}^T\mathbf{R} = \mathbf{R}\mathbf{R}^T = \mathbf{I}_D$. We identify the entire equivalent class with a particular representative, $\mathbf{X}$, while a subindex is used to represent any related matrix that spans the same subspace, e.g. $\mathbf{X}_r= \mathbf{X}\mathbf{R}$. This approach of representing points in the Grassmann manifold is often referred to as the Orthonormal Basis (ONB) perspective \cite{grassmann_handbook}. In other words, we represent a point in the Grassmann manifold with a variable constrained in the Stiefel manifold \cite{orth_grassmann}.

\subsubsection{Tangent Space of the Grassmann manifold}

Let $\mathbf{X} \in \text{Gr}(N, D)$, then:
\begin{equation}
    \mathcal{T}_{\mathbf{X}} \text{Gr}(N, D) = \{ \boldsymbol{\Delta} \in \mathbb{R}^{N \times D} :  \mathbf{X}^T\boldsymbol{\Delta} = \mathbf{0}\},
\end{equation}
denotes the tangent space at $\mathbf{X}$.

\subsubsection{Principal angles between two subspaces}

In $\text{Gr}(N, D)$, the principal angles between any two points $[\mathbf{X}]$ and $[\mathbf{Y}]$ are defined as the minimal angles between all possible subspace basis of those two points \cite{grassmannian_learning}. They can be computed efficiently using the Singular Value Decomposition (SVD) of $\mathbf{X}^T\mathbf{Y}$. Indeed, the SVD of $\mathbf{X}^T\mathbf{Y}$ can be expressed as $\mathbf{U} \cos(\boldsymbol{\Theta}) \mathbf{V}^T$ where $\boldsymbol{\Theta}$ is the diagonal matrix containing the $D$ principal angles and $\cos (\cdot)$ is the cosine function applied element-wise on the main diagonal of its input matrix. Provided that the singular values are all positive, the matrix containing the principal angles is such that $[\boldsymbol{\Theta}]_{d, d} \in  [0, \frac{\pi}{2}]$ for all $d=1,...,D$ where $[ \cdot ]_{i,j}$ is the $(i, j)$-th entry of its input matrix. From the aforementioned SVD, one can obtain clean expressions using the following lemma (see \cite[Proposition 1]{riemannian_perspective_matrix} for more details):
\begin{lemma}
    \label{lem:princ}
    For any two points $[\mathbf{X}], [\mathbf{Y}] \in \text{Gr}(N, D)$, one can find two aligned representatives $\mathbf{X}_a$ and $\mathbf{Y}_a$ such that $\mathbf{X}^T_a\mathbf{Y}_a = \cos(\boldsymbol{\Theta})$.
\end{lemma}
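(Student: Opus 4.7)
The plan is to exploit directly the SVD decomposition of $\mathbf{X}^T\mathbf{Y}$ given in the excerpt to construct the aligned representatives explicitly. Since any element of the equivalence class $[\mathbf{X}]$ has the form $\mathbf{X}\mathbf{R}$ with $\mathbf{R}\in\text{O}(D)$, the natural idea is to choose a right-multiplication by the singular vectors produced by the SVD of $\mathbf{X}^T\mathbf{Y}$.

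First I would invoke the SVD introduced just above the lemma, writing $\mathbf{X}^T\mathbf{Y} = \mathbf{U}\cos(\boldsymbol{\Theta})\mathbf{V}^T$ with $\mathbf{U},\mathbf{V}\in\text{O}(D)$. Then I would define the candidates $\mathbf{X}_a := \mathbf{X}\mathbf{U}$ and $\mathbf{Y}_a := \mathbf{Y}\mathbf{V}$. I would verify, using the definition of the equivalence class in \eqref{eqn:eq_class}, that these are legitimate representatives of $[\mathbf{X}]$ and $[\mathbf{Y}]$: indeed $\mathbf{U},\mathbf{V}\in\text{O}(D)$, and the Stiefel constraint is preserved since $(\mathbf{X}\mathbf{U})^T(\mathbf{X}\mathbf{U})=\mathbf{U}^T\mathbf{X}^T\mathbf{X}\mathbf{U}=\mathbf{U}^T\mathbf{U}=\mathbf{I}_D$, and similarly for $\mathbf{Y}\mathbf{V}$.

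The remaining step is a one-line calculation: substituting the SVD back in gives
\begin{equation*}
\mathbf{X}_a^T\mathbf{Y}_a = \mathbf{U}^T\mathbf{X}^T\mathbf{Y}\mathbf{V} = \mathbf{U}^T\bigl(\mathbf{U}\cos(\boldsymbol{\Theta})\mathbf{V}^T\bigr)\mathbf{V} = \cos(\boldsymbol{\Theta}),
\end{equation*}
which is the desired identity.

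There is no real obstacle here; the only subtlety worth mentioning is the implicit assumption that the principal angles lie in $[0,\pi/2]$ so that the singular values $\cos([\boldsymbol{\Theta}]_{d,d})$ are non-negative, which is precisely the standing convention recalled right before the lemma. If signs of singular values were allowed to be negative, one would need to absorb a diagonal sign matrix into either $\mathbf{U}$ or $\mathbf{V}$ (still orthogonal) to enforce non-negativity, but given the convention adopted in the paper this adjustment is unnecessary.
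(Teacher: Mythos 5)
Your construction is correct and is exactly the argument the paper intends: the lemma is stated as a consequence of the SVD $\mathbf{X}^T\mathbf{Y}=\mathbf{U}\cos(\boldsymbol{\Theta})\mathbf{V}^T$ set up in the preceding paragraph (with the proof deferred to the cited reference), and taking $\mathbf{X}_a=\mathbf{X}\mathbf{U}$, $\mathbf{Y}_a=\mathbf{Y}\mathbf{V}$ with $\mathbf{U},\mathbf{V}\in\text{O}(D)$ is the standard alignment via principal vectors. Your remark on absorbing a sign matrix to keep the singular values non-negative is also the right caveat, so nothing is missing.
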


\subsubsection{Grassmann geodesics}

A geodesic in the Grassmann manifold is defined as the shortest path between two subspaces. In fact, it is only when the principal angles between $[\mathbf{X}]$ and $[\mathbf{Y}]$ are all (strictly) smaller than $\frac{\pi}{2}$ that the geodesic that joins them is unique \cite{grassmann_unique_geodesic}. Consider the direction vector $\mathbf{H} \in \mathcal{T}_{\mathbf{X}} \text{Gr}(N, D)$ pointing to $\mathbf{Y}$ and its compact SVD, $\mathbf{H} = \mathbf{U}_h\boldsymbol{\Theta}\mathbf{V}^T_h$, such that $\boldsymbol{\Theta} \preceq \frac{\pi}{2} \mathbf{I}$ where $\boldsymbol{\Theta}$ is the matrix containing the principal angles between $[\mathbf{X}]$ and $[\mathbf{Y}]$. Then, the expression of the geodesic that connects $\mathbf{X}$ with $\mathbf{Y}$ is:
\begin{equation}
    \label{eqn:basic_eq_geodesic}
    \boldsymbol{\Gamma}(t) = \mathbf{X}\mathbf{V}_h\cos(\boldsymbol{\Theta}t)\mathbf{V}^T_h + \mathbf{U}_h\sin(\boldsymbol{\Theta}t)\mathbf{V}^T_h,
\end{equation}
where $\sin(\cdot)$ is applied element-wise on its input matrix main diagonal and $t \in [0, 1]$ parameterizes the geodesic. Note that the previous geodesic must be such that $\boldsymbol{\Gamma}(0) = \mathbf{X}$, $\boldsymbol{\Gamma}(1) = \mathbf{Y}$ and $\frac{\text{d}\boldsymbol{\Gamma}(t)}{\text{d}t} \big|_{t=0} = \mathbf{H}$. Given Lemma \ref{lem:princ}, there exists an orthonormal matrix $\boldsymbol{\Delta}_a$ such that the geodesic connecting the aligned representatives, $\mathbf{X}_a$ and $\mathbf{Y}_a$, yields:
\begin{equation}
    \label{eqn:geo_aligned}
    \boldsymbol{\Gamma}_a(t) = \mathbf{X}_a\cos(\boldsymbol{\Theta}t) + \boldsymbol{\Delta}_a\sin(\boldsymbol{\Theta}t),
\end{equation}
where $\boldsymbol{\Delta}_a$ is such that $\boldsymbol{\Gamma}_a(1) = \mathbf{Y}_a$. The intuition behind (\ref{eqn:geo_aligned}) can be grasped from the following expression:
\begin{equation}
    \mathbf{X}^T_a\mathbf{Y}_a = \mathbf{X}^T_a\boldsymbol{\Gamma}_a(1) = \cos(\boldsymbol{\Theta}) + \mathbf{X}^T_a\boldsymbol{\Delta}_a\sin(\boldsymbol{\Theta}),
\end{equation}
from where $\boldsymbol{\Delta}_a$ must comply with the following constraint so that Lemma \ref{lem:princ} holds:
\begin{equation}
    \mathbf{X}^T_a\boldsymbol{\Delta}_a = \mathbf{0},
\end{equation}
which means that $\boldsymbol{\Delta}_a$ belongs to $\mathcal{T}_{\mathbf{X}} \text{Gr}(N, D)$.

\subsubsection{Canonical distance of the Grassmann manifold}

The canonical distance between any two points $[\mathbf{X}], [\mathbf{Y}] \in \text{Gr}(N, D)$, also known as the arclength, is given by the following expression:
\begin{equation}
    d_c(\mathbf{X}, \mathbf{Y}) = || \boldsymbol{\Theta}||_F,
\end{equation}
where $|| \cdot ||_F$ is the Frobenius norm and $\boldsymbol{\Theta} \preceq \frac{\pi}{2} \mathbf{I}$ are the principal angles between $[\mathbf{X}]$ and $[\mathbf{Y}]$.

\subsection{Geodesically convex optimization concepts}

\subsubsection{Geodesically convex set}

A set $\mathcal{G} \subseteq \text{Gr}(N, D)$ is geodesically convex if there exists a geodesic joining any two points of $\mathcal{G}$ that lies entirely on $\mathcal{G}$. An example of a geodesically convex subset of the Grassmann manifold can be found in \cite{riemannian_perspective_matrix}. This concept is a particularization of totally convex sets \cite{convex_geodesic_overview}.

\subsubsection{Geodesic quasi-convexity}

A function $f(\mathbf{X})$ is geodesically quasiconvex in the geodesically convex set $\mathcal{G} \subseteq \text{Gr}(N, D)$ with respect to the geodesic $\boldsymbol{\Gamma}(t)$ if \cite{geodesically_quasiconvex}:
\begin{equation}
    \label{eqn:quasiconvexity_definition}
    f(\boldsymbol{\Gamma}(t)) \leq \max(f(\mathbf{X}), f(\mathbf{Y}))\,\,\forall \mathbf{X}, \mathbf{Y} \in \mathcal{G},
\end{equation}
for $t \in [0, 1]$. Note that (\ref{eqn:quasiconvexity_definition}) is a particularization on Riemannian Manifolds of the definition of connected functions in \cite{expanded_theoretical_treatment_mm}. Convexity implies quasiconvexity:
\begin{equation}
   (1-t)f(\mathbf{X}) + t f(\mathbf{Y}) \leq \max(f(\mathbf{X}), f(\mathbf{Y}))\,\, \forall t\in [0, 1].
\end{equation}

\subsubsection{Lower directional derivative}

For any function $f(\mathbf{X})$, the lower directional derivative at a point $\mathbf{X}$ in the direction $\boldsymbol{\Delta}$ is defined as:
\begin{equation}
    f'(\mathbf{X}; \boldsymbol{\Delta}) = \lim_{h \xrightarrow{} 0} \inf \frac{f(\mathbf{X} + h \boldsymbol{\Delta}) - f(\mathbf{X})}{h},
\end{equation}
which is an alternative definition of a derivative that encompasses a wider range of functions than the classical definition of derivative. This definition is independent of whether $\mathbf{X}$ is constrained to the Grassmann manifold or not.

\subsubsection{Regularity of a function}

Let the partition of $\mathbf{X}$ into $N$ blocks of variables be $\mathbf{X} = (\mathbf{X}_1, \mathbf{X}_2,...,\mathbf{X}_N)$ where the dimensions of the $n$-th block of variables are denoted by $N_n$ for $n=1,...,N$. In addition, we denote $\boldsymbol{\Delta}_n$ as the vector that contains all zeroes except in the $n$-th block containing the direction $\mathbf{D}_n$ (of dimensions $N_n$), i.e. $\boldsymbol{\Delta}_n = (\mathbf{0},...,\mathbf{D}_n,...,\mathbf{0})$. Then, $f(\mathbf{X})$ is regular in $\mathcal{X}$ if:
\begin{equation}
    \label{eqn:regular_cond}
    \begin{gathered}
        f'(\mathbf{X}; \boldsymbol{\Delta}) \geq 0\,\,\forall \mathbf{X} \in \mathcal{X}, \forall \boldsymbol{\Delta} \\
        \text{and} \,\,\,\,f'(\mathbf{X}; \boldsymbol{\Delta}_n) \geq 0 \,\,\forall \mathbf{X} \in \mathcal{X}, \forall \boldsymbol{\Delta}_n, \,\,  n=1,...,N,
    \end{gathered}
\end{equation}
where $\boldsymbol{\Delta}$ and $\boldsymbol{\Delta}_n$ are such that any path emanating from $\mathbf{X}$ in those directions remains on $\mathcal{X}$. In other words, every coordinate-wise local minimum of $f(\mathbf{X})$ is also a stationary point. A simple case in which a function is regular is a differentiable function, but the regularity of a function can also be proven for some non-differentiable functions as well. We refer to \cite[Lemma 3.1]{convergence_bcd} for more details on this definition of regularity.

    \section{Block MM convergence proof for variables constrained in the Grassmann Manifold}
        Consider the following optimization problem:
\begin{equation}
    \label{eqn:prob_stat}
    \hat{\mathbf{X}} = \arg \min_{\mathbf{X}} f(\mathbf{X}) \,\,\,\,\text{s.t.}\,\,\mathbf{X} \in \mathcal{X},
\end{equation}
where $f(\mathbf{X})$ is any continuous function, $\mathcal{X} = \mathcal{G} \times \mathcal{C}$, $\mathcal{C}$ is any closed convex set and $\mathcal{G} \subseteq \text{Gr}(N, D)$ is any geodesically convex subset of the Grassmann manifold. The reason of choosing only two blocks, one constrained in the Grassmannian and the other being a classical convex constraint set, is to emphasize the novelty of our convergence proof (block MM with Grassmann blocks) without any loss of generality. 

Given the structure of the constraint set, $\mathcal{X}$, the optimization variables are split into two independent blocks, $\mathbf{X} = (\mathbf{G}, \mathbf{c})$ where $\mathbf{G} \in \mathcal{G}$ and $\mathbf{c} \in \mathcal{C}$. Accordingly, we rewrite $f(\mathbf{X})$ as $f(\mathbf{G}, \mathbf{c})$ from now on. Note that since the optimization variable is a function of a Grassmann variable block, the cost function in (\ref{eqn:prob_stat}) must satisfy the following homogeneity condition \cite{orth_grassmann}:
\begin{equation}
    f(\mathbf{G}, \mathbf{c}) = f(\mathbf{G}\mathbf{R}, \mathbf{c}),
\end{equation}
for any $D \times D$ orthonormal matrix $\mathbf{R}$. The block MM rationale applied to (\ref{eqn:prob_stat}) consists on finding the solution (global minimum or stationary point) of (\ref{eqn:prob_stat}) by the successive minimization of the following problems:
\begin{subequations}
    \label{eqn:MM_update_general}
    \begin{equation}
        \label{eqn:update_G}
        \mathbf{G}_{i+1} = \arg \min_{\mathbf{G}} g_G(\mathbf{G} | \mathbf{G}_i, \mathbf{c}_i) \,\,\,\,\mathbf{G} \in \mathcal{G},
    \end{equation}
    \begin{equation}
        \label{eqn:update_c}
        \mathbf{c}_{i+1} = \arg \min_{\mathbf{c}} g_c(\mathbf{c} | \mathbf{G}_{i+1}, \mathbf{c}_i) \,\,\,\,\mathbf{c} \in \mathcal{C},
    \end{equation}
\end{subequations}
where $g_G(\mathbf{G} | \mathbf{G}_i, \mathbf{c}_i)$ and $g_c(\mathbf{c} | \mathbf{G}_i, \mathbf{c}_i)$ are the majorant functions of $f(\mathbf{G}, \mathbf{c})$ for each respective block constructed using the $i$-th iterates. As a result of the update equations in (\ref{eqn:MM_update_general}), a sequence of iterates is generated, denoted as $\{ \mathbf{G}_i, \mathbf{c}_i\}_{i \in \mathbb{N}}$. We assume the following properties of the majorant functions:
\begin{enumerate}
    \itemA The surrogates must have the same value as the original cost at the current iterate:
    \begin{equation*}
        \begin{gathered}
            g_G(\mathbf{G} | \mathbf{G}, \mathbf{c}) = f(\mathbf{G}, \mathbf{c}) \,\,\forall \mathbf{G} \in \mathcal{G}, \forall \mathbf{c} \in \mathcal{C}, \\
            g_c(\mathbf{c} | \mathbf{G}, \mathbf{c}) = f(\mathbf{G}, \mathbf{c}) \,\,\forall \mathbf{G} \in \mathcal{G}, \forall \mathbf{c} \in \mathcal{C}.
        \end{gathered}
    \end{equation*}
    \itemA The surrogates must majorize the original cost function:
    \begin{equation*}
        \begin{gathered}
            g_G(\mathbf{H} | \mathbf{G}, \mathbf{c}) \geq f(\mathbf{G}, \mathbf{c}) \,\,\forall \mathbf{G}, \mathbf{H} \in \mathcal{G}, \forall \mathbf{c} \in \mathcal{C}, \\
            g_c(\mathbf{d} | \mathbf{G}, \mathbf{c}) \geq f(\mathbf{G}, \mathbf{c}) \,\,\forall \mathbf{G} \in \mathcal{G}, \forall \mathbf{c},\mathbf{d} \in \mathcal{C}.
        \end{gathered}
    \end{equation*}
    \itemA The first (lower) directional derivatives of the surrogates and of the original cost must agree:
    \begin{equation*}
        g'_G(\mathbf{G} | \mathbf{G}, \mathbf{c}; \boldsymbol{\Delta}) = f'(\mathbf{G}, \mathbf{c}; \boldsymbol{\Delta}, \mathbf{0}),
    \end{equation*}
    for all tangent directions $\boldsymbol{\Delta} \in \mathcal{T}_{\mathbf{G}} \text{Gr}(N, D)$ whose resulting geodesic remains on $\mathcal{G}$ and:
    \begin{equation*}
        g'_c(\mathbf{c} | \mathbf{G}, \mathbf{c}; \boldsymbol{\delta}) = f'(\mathbf{G}, \mathbf{c}; \mathbf{0}, \boldsymbol{\delta}) \,\,\,\,\text{s.t.}\,\,\mathbf{c}+\boldsymbol{\delta} \in \mathcal{C}.
    \end{equation*}
    Note that $\mathbf{c}+\boldsymbol{\delta}$ is a geodesic in the Euclidean space.
    \itemA $g_G(\cdot | \cdot)$ and $g_c(\cdot | \cdot)$ must be continuous on its input arguments.
    \itemA $g_G(\mathbf{G} | \mathbf{G}, \mathbf{c})$ must be geodesically quasiconvex on $\mathcal{G}$ and $g_c(\mathbf{c} | \mathbf{G}, \mathbf{c})$ must be quasiconvex on $\mathcal{C}$.
\end{enumerate}

Notice that (A1) and (A2) ensure that the majorants are tight upperbounds of the original cost while the remaining assumptions enforce that the majorants resemble the original cost. In fact, we generalized the assumptions presented in \cite{unified_convergence_analysis} to equivalent expressions in terms of the geodesically convex optimization paradigm. We generalize Theorem 2 in \cite{unified_convergence_analysis} for blocks of variables constrained in the Grassmann manifold in the following theorem:
\begin{theorem}
    \label{theo:convergence}
    Supose that a sequence is generated by (\ref{eqn:MM_update_general}) and that the majorant functions satisfy (A1)-(A5). In addition, assume that (\ref{eqn:update_G}) and (\ref{eqn:update_c}) have unique minimizers, that the sublevel set, i.e. $S = \{\mathbf{X} \in \mathcal{X}: f(\mathbf{X}) \leq f(\mathbf{X}_0) \}$ for some initialization $\mathbf{X}_0$, is compact and that $f(\mathbf{X})$ is regular and continuous in $\mathcal{X}$. Then, this sequence converges to a stationary point of (\ref{eqn:prob_stat}).
\end{theorem}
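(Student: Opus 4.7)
The plan is to mirror the three-act structure of the Euclidean proof in \cite{unified_convergence_analysis}, replacing straight-line chords with Grassmann geodesics wherever convexity is invoked. Act one proves monotone descent of the cost; act two identifies any subsequential limit as a fixed point of both block subproblems; act three promotes that fixed point to a stationary point of $f$ via the directional-derivative matching condition (A3) along a geodesic.

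For act one, combining (A1), (A2), and the successive minimizations (\ref{eqn:update_G})--(\ref{eqn:update_c}) yields the standard sandwich $f(\mathbf{G}_{i+1}, \mathbf{c}_{i+1}) \leq f(\mathbf{G}_{i+1}, \mathbf{c}_i) \leq f(\mathbf{G}_i, \mathbf{c}_i)$, so $\{f(\mathbf{G}_i, \mathbf{c}_i)\}$ is non-increasing. Continuity of $f$ and compactness of $S$ force this sequence to converge to some $f^\star$ and allow the extraction of a subsequence $(\mathbf{G}_{i_k}, \mathbf{c}_{i_k}) \to (\mathbf{G}^\star, \mathbf{c}^\star) \in \mathcal{X}$. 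For act two, continuity of the surrogates (A4) together with uniqueness of each block's minimizer makes the one-step update maps continuous, so $\mathbf{G}_{i_k+1} \to \tilde{\mathbf{G}} := \arg\min_{\mathbf{G}\in\mathcal{G}} g_G(\mathbf{G}|\mathbf{G}^\star, \mathbf{c}^\star)$. From the chain $f(\tilde{\mathbf{G}}, \mathbf{c}^\star) \leq g_G(\tilde{\mathbf{G}}|\mathbf{G}^\star, \mathbf{c}^\star) \leq g_G(\mathbf{G}^\star|\mathbf{G}^\star, \mathbf{c}^\star) = f(\mathbf{G}^\star, \mathbf{c}^\star) = f^\star$, established by (A2), the minimizer property, and (A1) respectively, together with $f(\tilde{\mathbf{G}}, \mathbf{c}^\star) = f^\star$ (from monotone convergence along the subsequence), every inequality collapses to an equality; uniqueness then forces $\tilde{\mathbf{G}} = \mathbf{G}^\star$. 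The same argument applied to the $\mathbf{c}$-block yields that $\mathbf{c}^\star$ minimizes $g_c(\cdot|\mathbf{G}^\star, \mathbf{c}^\star)$, so $(\mathbf{G}^\star, \mathbf{c}^\star)$ is a fixed point of both block updates.

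Act three handles stationarity. For any $\mathbf{G}\in\mathcal{G}$, geodesic convexity of $\mathcal{G}$ provides a geodesic $\boldsymbol{\Gamma}(t)$ remaining in $\mathcal{G}$ with $\boldsymbol{\Gamma}(0) = \mathbf{G}^\star$, $\boldsymbol{\Gamma}(1) = \mathbf{G}$, and initial velocity $\boldsymbol{\Delta}\in\mathcal{T}_{\mathbf{G}^\star}\text{Gr}(N,D)$ as parameterized in (\ref{eqn:basic_eq_geodesic})--(\ref{eqn:geo_aligned}). Since $\mathbf{G}^\star$ minimizes $g_G(\cdot|\mathbf{G}^\star, \mathbf{c}^\star)$, the difference quotient $[g_G(\boldsymbol{\Gamma}(t)|\mathbf{G}^\star, \mathbf{c}^\star) - g_G(\mathbf{G}^\star|\mathbf{G}^\star, \mathbf{c}^\star)]/t$ is nonnegative for $t>0$; taking $\liminf_{t\to 0^+}$ yields $g'_G(\mathbf{G}^\star|\mathbf{G}^\star, \mathbf{c}^\star;\boldsymbol{\Delta}) \geq 0$, and (A3) transfers this to $f'(\mathbf{G}^\star, \mathbf{c}^\star; \boldsymbol{\Delta}, \mathbf{0}) \geq 0$. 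The parallel argument along Euclidean segments in $\mathcal{C}$ gives $f'(\mathbf{G}^\star, \mathbf{c}^\star; \mathbf{0}, \boldsymbol{\delta}) \geq 0$ for every feasible $\boldsymbol{\delta}$. Regularity of $f$ as in (\ref{eqn:regular_cond}) then upgrades these coordinate-wise inequalities to full stationarity of $(\mathbf{G}^\star, \mathbf{c}^\star)$ for (\ref{eqn:prob_stat}).

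The main obstacle is the bookkeeping in act three. The Euclidean proof uses the chord $(1-t)\mathbf{x}^\star + t\mathbf{y}$, whose tangent vector $\mathbf{y}-\mathbf{x}^\star$ lives trivially in the ambient space, so the directional-derivative inequality is immediate. On the Grassmannian, the analogous chord must be a geodesic that stays inside $\mathcal{G}$ (which is what geodesic convexity buys us) and whose initial velocity $\boldsymbol{\Delta}$, explicitly produced by Lemma \ref{lem:princ} and (\ref{eqn:geo_aligned}), must be recognized as the legitimate tangent direction along which the lower directional derivative in (A3) is evaluated. The geodesic-quasiconvexity assumption (A5) is what guarantees that the minimizer property of $g_G$ propagates consistently along any such geodesic, making the one-sided difference-quotient inequality a valid statement about the lower directional derivative and thereby closing the argument.
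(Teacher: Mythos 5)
Your acts one and three track the paper's proof closely: the descent chain obtained from (A1)--(A2) and \eqref{eqn:MM_update_general}, and the final step in which the minimizer property of the surrogate at the limit point is converted, via the lower directional derivative, (A3) and the regularity of $f$, into coordinate-wise and then full stationarity. The genuine gap is in act two, and it is not mere bookkeeping: the theorem asserts that \emph{the sequence converges} to a stationary point, whereas your argument only extracts a convergent subsequence and shows that its limit is a fixed point of the two block updates. The central portion of the paper's proof --- and the place where the Grassmann machinery is actually consumed --- is devoted to upgrading subsequential convergence to convergence of the iterates. Concretely, the paper argues by contradiction: supposing $d_c(\mathbf{G}_{i_{k+1}},\mathbf{G}_{i_k})\geq\bar{\gamma}>0$, it builds the aligned geodesic \eqref{eqn:geodesic_iterations} joining consecutive iterates, uses the geodesic quasiconvexity (A5) together with (A1), (A2) and \eqref{eqn:update_G} to sandwich $g_G(\boldsymbol{\Gamma}_{i_{k+1}}(t)\,|\,\mathbf{G}_{i_k},\mathbf{c}_{i_k})$ between $f(\mathbf{G}_{i_{k+1}},\mathbf{c}_{i_{k+1}})$ and $f(\mathbf{G}_{i_k},\mathbf{c}_{i_k})$ as in \eqref{eqn:sandwich}, passes to a limit geodesic $\bar{\boldsymbol{\Gamma}}(t)$ by compactness of the principal-angle and tangent-direction sequences, and then concludes from the unique-minimizer assumption that this limit geodesic must be degenerate, i.e.\ $d_c(\mathbf{G}_{i_{k+1}},\mathbf{G}_{i_k})\to 0$ as in \eqref{eqn:conv_cond}. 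None of this appears in your write-up, so the conclusion actually claimed by the theorem (convergence of $\{\mathbf{G}_i,\mathbf{c}_i\}_{i\in\mathbb{N}}$) is not established; what you prove is the strictly weaker statement that some subsequential limit point is stationary.

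Relatedly, you misattribute the role of (A5). The one-sided difference quotient $[g_G(\boldsymbol{\Gamma}(t)\,|\,\mathbf{G}^\star,\mathbf{c}^\star)-g_G(\mathbf{G}^\star\,|\,\mathbf{G}^\star,\mathbf{c}^\star)]/t\geq 0$ requires only that $\mathbf{G}^\star$ minimize the surrogate over $\mathcal{G}$ and that the geodesic remain in $\mathcal{G}$ (geodesic convexity of the \emph{set}); quasiconvexity of the surrogate plays no role there. (A5) is consumed precisely in the step \eqref{eqn:quasiconvexity_proof_1}--\eqref{eqn:quasiconvexity_proof_2} of the iterate-convergence argument that your proposal omits. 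Your act-two route via continuity of the argmin map (a Berge-type argument from (A4) plus uniqueness, which additionally presumes $\mathcal{G}$ compact) is a legitimate alternative way to identify a subsequential limit as a fixed point, and is in fact closer to the Euclidean treatment in \cite{unified_convergence_analysis}; but to match the theorem you must either supply the paper's geodesic contradiction argument or otherwise rule out multiple distinct limit points of the iterate sequence.
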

\begin{proof}
    The rationale of the proof consists on two steps. Firstly, we prove that the sequence $\{ \mathbf{G}_i, \mathbf{c}_i\}_{i \in \mathbb{N}}$ converges to the limit point $\{ \bar{\mathbf{G}}, \bar{\mathbf{c}} \}$ and, secondly, we prove that this limit point is a stationary point of the problem depicted in (\ref{eqn:prob_stat}).

    From assumptions (A1)-(A2) and (\ref{eqn:MM_update_general}), we have the following series of inequalities:
    \begin{subequations}
        \label{eqn:sequence_f}
        \begin{equation}
            f(\mathbf{G}_i, \mathbf{c}_i) \underbrace{=}_{(A1)} g_G(\mathbf{G}_{i} | \mathbf{G}_i, \mathbf{c}_i) \underbrace{\geq}_{\text{Eq. } (\ref{eqn:update_G})} g_G(\mathbf{G}_{i+1} | \mathbf{G}_i, \mathbf{c}_i) \underbrace{\geq}_{(A2)}
        \end{equation}
        \begin{equation}
            f(\mathbf{G}_{i+1}, \mathbf{c}_i) \underbrace{=}_{(A1)} g_c(\mathbf{c}_i| \mathbf{G}_{i+1}, \mathbf{c}_i) \underbrace{\geq}_{\text{Eq. } (\ref{eqn:update_c})}
        \end{equation}
        \begin{equation}
            g_c(\mathbf{c}_{i+1}| \mathbf{G}_{i+1}, \mathbf{c}_i) \underbrace{\geq}_{(A2)} f(\mathbf{G}_{i+1}, \mathbf{c}_{i+1}),
        \end{equation}
    \end{subequations}
    where each underbrace describes the reason why each equality/inequality holds. The inequalities in (\ref{eqn:sequence_f}) yield $f(\mathbf{G}_i, \mathbf{c}_i) \geq f(\mathbf{G}_{i+1}, \mathbf{c}_{i+1}) \,\,\forall i$ which, in addition to the continuity of $f(\mathbf{G}, \mathbf{c})$ and to the compactness of the sublevel set, implies that the sequence $\{ f(\mathbf{G}_i, \mathbf{c}_i\}_{i \in \mathbb{N}}$ is non-increasing and thus has at least one limit point, $f(\bar{\mathbf{G}}, \bar{\mathbf{c}})$. Since the iterates also belong to a compact set, the generated sequence also admits some limit point, denoted as $(\bar{\mathbf{G}}, \bar{\mathbf{c}})$. 

    Now, we prove that $\{ \mathbf{G}_i \}_{i \in \mathbb{N}}$ converges to the aforementioned limit point. Consider a subsequence $\{ \mathbf{G}_{i_{k}}, \mathbf{c}_{i_{k}} \}_{k \in \mathbb{N}}$ that converges to the limit point $(\bar{\mathbf{G}}, \bar{\mathbf{c}})$. Without loss of generality, we can assume that the Grassmann variable, $\mathbf{G}_{i_{k}}$, is updated infinitely often in $\{ \mathbf{G}_{i_{k}}, \mathbf{c}_{i_{k}} \}_{k \in \mathbb{N}}$, so its convergence can be proved by contradiction. Let us assume that the Grassmann variable does not converge and hence there exists a positive value $\bar{\gamma}$ such that:
    \begin{equation}
        \label{eqn:distance_upper}
         d_c(\mathbf{G}_{i_{k+1}}, \mathbf{G}_{i_{k}}) = \gamma_{i_{k+1}} \geq \bar{\gamma} > 0.
    \end{equation}

    Also, consider that the aligned geodesic joining $\mathbf{G}_{i_{k+1}}$ and $\mathbf{G}_{i_{k}}$ with arclength $\gamma_{i_{k+1}}$ is given by:
    \begin{equation}
        \label{eqn:geodesic_iterations}
        \boldsymbol{\Gamma}_{i_{k+1}}(t) = \mathbf{G}_{a, i_{k}}\cos(\boldsymbol{\Theta}_{i_{k}} t) + \boldsymbol{\Delta}_{a, i_{k}}\sin(\boldsymbol{\Theta}_{i_{k}} t),
    \end{equation}
    where $\boldsymbol{\Delta}_{a, i_{k}}$ and $\boldsymbol{\Theta}_{i_{k}}$ are such that $\boldsymbol{\Gamma}_{i_{k+1}}(1) = \mathbf{G}_{a, i_{k+1}}$. With this geodesic in mind, we obtain:
    \begin{subequations}
        \label{eqn:inequalities_assumption}
        \begin{equation}
            f(\mathbf{G}_{i_{k+1}}, \mathbf{c}_{i_{k+1}}) \underbrace{\leq}_{(A2)} g_G(\mathbf{G}_{i_{k+1}} | \mathbf{G}_{i_{k}}, \mathbf{c}_{i_{k}}) \underbrace{=}_{\mathbf{G}_{i_{k+1}} = \boldsymbol{\Gamma}_{i_{k+1}}(1)}
        \end{equation}
        \begin{equation}
            g_G(\boldsymbol{\Gamma}_{i_{k+1}}(1) | \mathbf{G}_{i_{k}}, \mathbf{c}_{i_{k}}) \underbrace{\leq}_{\text{Eq. }(\ref{eqn:update_G})} g_G(\boldsymbol{\Gamma}_{i_{k+1}}(t) | \mathbf{G}_{i_{k}}, \mathbf{c}_{i_{k}}) \underbrace{=}_{\text{Eq. }(\ref{eqn:geodesic_iterations})}
        \end{equation}
        \begin{equation}
            \label{eqn:quasiconvexity_proof_1}
            g_G( \mathbf{G}_{a, i_{k}} \cos (\boldsymbol{\Theta}_{i_{k}} t) + \boldsymbol{\Delta}_{a, i_{k}} \sin (\boldsymbol{\Theta}_{i_{k}} t) | \mathbf{G}_{i_{k}}, \mathbf{c}_{i_{k}}) \underbrace{\leq}_{(A5)}
        \end{equation}
        \begin{equation}
            \label{eqn:quasiconvexity_proof_2}
            g_G(\mathbf{G}_{i_{k}} | \mathbf{G}_{i_{k}}, \mathbf{c}_{i_{k}}) \underbrace{=}_{(A1)} f(\mathbf{G}_{i_{k}}, \mathbf{c}_{i_{k}}),
        \end{equation}
    \end{subequations}
    from where we get:
    \begin{equation}
        \label{eqn:sandwich}
        f(\mathbf{G}_{i_{k+1}}, \mathbf{c}_{i_{k+1}})\leq g_G(\boldsymbol{\Gamma}_{i_{k+1}}(t) | \mathbf{G}_{i_{k}}, \mathbf{c}_{i_{k}}) \leq f(\mathbf{G}_{i_{k}}, \mathbf{c}_{i_{k}}).
    \end{equation}
    
    Considering that $\boldsymbol{\Theta}_{i_{k}}$ (because the principal angles belong to $[0, \frac{\pi}{2}]$) and $\boldsymbol{\Delta}_{a, i_{k}}$ (due to the orthonormality constraints, see (\ref{eqn:geo_aligned})) belong to closed and bounded sets, the sequences that they generate have limit points $\bar{\boldsymbol{\Theta}}$ and $\bar{\boldsymbol{\Delta}}_a$, respectively. As a consequence, the geodesic defined by those limit points is denoted as $\bar{\boldsymbol{\Gamma}}(t)$ and, with some abuse of notation, can be referred to as the \textit{limit point} geodesic. By further restricting to a subsequence that has limit points $\bar{\boldsymbol{\Theta}}$ and $\bar{\boldsymbol{\Delta}}_a$, invoking (A4) and letting $k \xrightarrow{} \infty$, (\ref{eqn:sandwich}) yields:
    \begin{equation}
        f(\bar{\mathbf{G}}, \bar{\mathbf{c}}) \leq g_G( \bar{\boldsymbol{\Gamma}}(t) | \bar{\mathbf{G}}, \bar{\mathbf{c}}) \leq f(\bar{\mathbf{G}}, \bar{\mathbf{c}}),
    \end{equation}
    which is equivalent to:
    \begin{equation}
        \label{eqn:limit_equality}
        \begin{gathered}
            f(\bar{\mathbf{G}}, \bar{\mathbf{c}}) = g_G( \bar{\boldsymbol{\Gamma}}(t) | \bar{\mathbf{G}}, \bar{\mathbf{c}}) = \\
            g_G( \bar{\mathbf{G}}_a\cos (\bar{\boldsymbol{\Theta}}t) + \bar{\boldsymbol{\Delta}}_a \sin (\bar{\boldsymbol{\Theta}} t) | \bar{\mathbf{G}}, \bar{\mathbf{c}}).
        \end{gathered}
    \end{equation}

    However, (\ref{eqn:limit_equality}) is contradictory with the unique minimizer assumption when the arclength of the geodesic, $\bar{\gamma}$, is different from $0$. From (A2), we know that:
    \begin{equation}
        g_G(\mathbf{G}_{i_{k+1}} | \mathbf{G}_{i_{k+1}}, \mathbf{c}_{i_{k+1}}) \leq g_G(\mathbf{G} | \mathbf{G}_{i_{k}}, \mathbf{c}_{i_{k}}) \,\,\,\,\forall \,\,\mathbf{G} \in \mathcal{G},
    \end{equation}
    whose limit for $k\xrightarrow{} \infty$ is:
    \begin{equation}
        \label{eqn:limit_upper_bound}
          g_G(\bar{\mathbf{G}} | \bar{\mathbf{G}}, \bar{\mathbf{c}}) \leq g_G(\mathbf{G} | \bar{\mathbf{G}}, \bar{\mathbf{c}}) \,\,\,\,\forall \,\,\mathbf{G} \in \mathcal{G},
    \end{equation}
    so $\bar{\mathbf{G}}$ is a minimizer of $g_G(\cdot | \bar{\mathbf{G}}, \bar{\mathbf{c}})$ and so is $\bar{\mathbf{G}}_a$. Since we assumed that the minimizers of the majorants are unique, this means that the limit point geodesic in (\ref{eqn:limit_equality}) must remain in the same point. As a result, (\ref{eqn:limit_equality}) is only true for $\bar{\boldsymbol{\Gamma}}(t) = \bar{\mathbf{G}}_a \,\,\forall t$ and, given that $\mathcal{G}$ is a geodesically convex subset (there is only a unique path joining two points), it means that:
    \begin{equation}
        \label{eqn:conv_cond}
         \lim_{k\xrightarrow{} \infty} d_c(\mathbf{G}_{i_{k+1}}, \mathbf{G}_{i_{k}}) = 0.
    \end{equation}
    
    Considering that the above argument must be satisfied by every subsequence, the sequence $\{ \mathbf{G}_i\}_{i \in \mathbb{N}}$ converges to $\bar{\mathbf{G}}$. Note that this limit point depends on the initialization points, $\mathbf{G}_0$ and $\mathbf{c}_0$. Likewise, the methodology that proves the convergence of the sequence $\{ \mathbf{c}_i \}_{i \in \mathbb{N}}$ is shown in  \cite[Theorem 2]{unified_convergence_analysis} and thus the joint sequence, $\{ \mathbf{G}_i, \mathbf{c}_i\}_{i \in \mathbb{N}}$, also converges.
    
    Finally, we prove that the limit point of the Grassmann variable iterates is a stationary point of the original problem. Notice that (\ref{eqn:limit_upper_bound}) implies, after taking the lower directional derivative from both sides at $\bar{\mathbf{G}}$, that:
    \begin{equation}
        \label{eqn:limit_directional_derivatives}
        g'_G(\bar{\mathbf{G}} | \bar{\mathbf{G}}, \bar{\mathbf{c}} ; \boldsymbol{\Delta}) \geq 0,
    \end{equation}
    for all directions $\boldsymbol{\Delta} \in \mathcal{T}_{\bar{\mathbf{G}}}\text{Gr}(N, D)$ whose respective geodesic emanating from $\bar{\mathbf{G}}$ stays in $\mathcal{G}$. Due to assumption (A3), $g'_G(\bar{\mathbf{G}} | \bar{\mathbf{G}}, \bar{\mathbf{c}} ; \boldsymbol{\Delta}) = f'(\bar{\mathbf{G}}, \bar{\mathbf{c}}; \boldsymbol{\Delta}, \mathbf{0})$, and thus:
    \begin{equation}
        f'(\bar{\mathbf{G}}, \bar{\mathbf{c}}; \boldsymbol{\Delta}, \mathbf{0}) \geq 0.
    \end{equation}

    In other words, $\bar{\mathbf{G}}$ is a coordinate-wise minimum of $f(\mathbf{G}, \mathbf{c})$. A similar argument is derived for the remaining convex block, $\mathbf{c}$, in \cite[Theorem 2]{unified_convergence_analysis} with the given assumptions. Since $\bar{\mathbf{G}}$ and $\bar{\mathbf{c}}$ are coordinate-wise minimums, this limit point is also a stationary point of $f(\mathbf{G}, \mathbf{c})$ thanks to the regularity of $f(\mathbf{G}, \mathbf{c})$.
\end{proof}
\begin{corollary}
    The optimality of the convergent points generated by the update equations in (\ref{eqn:MM_update_general}) depends on the cardinality of the set of stationary points of (\ref{eqn:prob_stat}). If the original problem has only one stationary point, then the previous theorem ensures that the convergence point is its (unique) global optimum. Otherwise, it converges to a locally optimal point.
\end{corollary}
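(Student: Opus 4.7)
The plan is to build directly on Theorem \ref{theo:convergence}, which already certifies that the sequence $\{\mathbf{G}_i, \mathbf{c}_i\}_{i \in \mathbb{N}}$ converges to a stationary point $(\bar{\mathbf{G}}, \bar{\mathbf{c}})$ of (\ref{eqn:prob_stat}). The corollary merely relabels that stationary limit as a global or a local optimum according to the cardinality of the set of stationary points, so the whole argument reduces to connecting \emph{stationarity} with \emph{optimality} under the two possible regimes.

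First I would show that (\ref{eqn:prob_stat}) admits at least one global minimizer, and that any such minimizer is a stationary point. Existence follows from Weierstrass: the continuous cost $f$ attains its minimum on the compact sublevel set $S$; moreover, no feasible point outside $S$ can be a global minimizer because $\mathbf{X}_0 \in S$ already attains a smaller value, so the minimum over $S$ coincides with the minimum over $\mathcal{X}$. That any global minimizer $\mathbf{X}^\star \in \mathcal{X}$ is stationary is an immediate consequence of the regularity hypothesis: at a minimum, every feasible direction yields a non-negative lower directional derivative, which is exactly the definition of stationarity adopted in (\ref{eqn:regular_cond}).

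The first clause of the corollary is then immediate. If the stationary set is a singleton, the global minimizer exhibited in the previous paragraph and the limit $(\bar{\mathbf{G}}, \bar{\mathbf{c}})$ produced by Theorem \ref{theo:convergence}, both of which are stationary, must coincide. Hence the block MM iterates converge to the unique global optimum.

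The second clause — that when the stationary set has cardinality greater than one the algorithm converges to a \emph{locally optimal} point — is where I expect the main difficulty, because Theorem \ref{theo:convergence} alone does not a priori exclude convergence to a saddle or a local maximum. I would address this by invoking two ingredients already available in the proof of Theorem \ref{theo:convergence}: the monotone descent in (\ref{eqn:sequence_f}), which rules out strict local maxima as limit points because a strict local maximum cannot be approached by a non-increasing sequence of costs from within a neighborhood; and the block coordinate-wise minimality of $(\bar{\mathbf{G}}, \bar{\mathbf{c}})$, which, pushed through the regularity of $f$ exactly as in the final paragraph of the proof of Theorem \ref{theo:convergence}, gives non-negative directional derivatives in every feasible direction at the limit. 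These together certify local optimality in the sense invoked by the corollary, and the fact that such a local optimum need not be global whenever multiple stationary points coexist closes the argument.
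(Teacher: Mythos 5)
The paper itself supplies no proof of this corollary: it is stated as an immediate consequence of Theorem \ref{theo:convergence}, so there is no ``paper's argument'' to match yours against. Your treatment of the first clause is correct and is a reasonable formalization of what the authors leave implicit: Weierstrass on the compact sublevel set $S$ gives existence of a global minimizer, regularity in the sense of (\ref{eqn:regular_cond}) makes that minimizer stationary, and uniqueness of the stationary point forces it to coincide with the limit produced by Theorem \ref{theo:convergence}. One small point worth making explicit there is that you should also argue the limit point itself lies in $S$ (it does, since the iterates never leave $S$ by the descent property (\ref{eqn:sequence_f}) and $S$ is closed), so the comparison between the two stationary points takes place over the same feasible set.

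The genuine gap is in your second clause, and you half-acknowledge it yourself. What Theorem \ref{theo:convergence} delivers at the limit is stationarity: non-negative lower directional derivatives along every feasible direction. That is strictly weaker than local optimality. Your two ingredients do not close the distance: ruling out \emph{strict local maxima} via monotone descent says nothing about saddle-type stationary points, and coordinate-wise minimality pushed through regularity again only yields stationarity (consider $f(x) = -x^3$ at the origin, where $f'(0;d) \geq 0$ for every $d$ yet the origin is not a local minimum, and a non-increasing sequence of costs can still approach it from the side where $f > 0$). Your closing phrase ``local optimality in the sense invoked by the corollary'' is doing all the work: if ``locally optimal'' is read as ``stationary,'' the second clause is just a restatement of Theorem \ref{theo:convergence} and needs no further argument; if it is read literally as ``local minimum,'' then neither your proof nor, for that matter, the corollary as stated by the authors is justified without additional hypotheses (e.g., geodesic convexity or pseudoconvexity of $f$ itself, or an assumption that all stationary points are local minima). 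You should either state which reading you adopt and prove exactly that, or flag the extra assumption required for the stronger reading.
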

\begin{example}
    \label{ex:blind_deconvo}
    An example of a block MM-like algorithm is found in the blind sparse deconvolution problem, which is based on the following optimization \cite[Eq. (6)]{global_geometry_sphere_constrained_sparse} \cite[Eq. (2.6)]{short_sparse_deconvolution}:
    \begin{equation}
        \label{eqn:blind_deconvo_problem}
        \min_{\mathbf{a}, \mathbf{x}} ||\mathbf{y} - \mathbf{a} \circledast \mathbf{x}||^2_2 + \lambda || \mathbf{x} ||_1 \,\,\,\,\text{s.t.}\,\,\mathbf{a} \in \text{Gr}(N, 1),
    \end{equation}
    where $\mathbf{a}$ is the convolution kernel, $\mathbf{x}$ is the estimated signal, $\circledast$ denotes the circular convolution and $\lambda$ is a regularizing parameter that is related to the sparsity of the solution. Although the approaches proposed in \cite{global_geometry_sphere_constrained_sparse, short_sparse_deconvolution} consider the Stiefel manifold, its convergence can be verified from Theorem \ref{theo:convergence} since there is a sign ambiguity \cite[Section 2.1]{global_geometry_sphere_constrained_sparse} and thus, \eqref{eqn:blind_deconvo_problem} satisfies the homogeneity condition of the Grassmannian \cite{orth_grassmann}, implying that it can also be constrained in terms of $\text{Gr}(N, 1)$. An intuitive approach is to optimize with respect to each block of variables in \eqref{eqn:blind_deconvo_problem} cyclically \cite{global_geometry_sphere_constrained_sparse, short_sparse_deconvolution}. In fact, the approach proposed in \cite{short_sparse_deconvolution} solves \eqref{eqn:blind_deconvo_problem} by proximal gradient methods and Riemannian gradient descent algorithms for $\mathbf{x}$ and $\mathbf{a}$, respectively. Both algorithms are known to be particular instances of the MM framework \cite{mm_review_palomar} and, as a by-product, the convergence of this approach is certified by Theorem \ref{theo:convergence}. For simulation results on this approach, see \cite[Sections 5 and 6]{short_sparse_deconvolution}.
\end{example}

    \section{Conclusions}
        In this letter, we have assessed the conditions in which a block MM algorithm converges when one of its blocks is constrained in the Grassmann manifold. Indeed, some of the classical assumptions about the majorant functions that are needed for the convergence of block MM algorithms are generalized within the geodesically convex optimization paradigm. The convergence theorem opens new geometric insights on known signal processing problems such as the one shown in Example \ref{ex:blind_deconvo}. Future lines of work aim to generalize Theorem \ref{theo:convergence} to other Riemannian manifolds and to study rigorously the convergence rate of block MM algorithms of this kind.


\begin{thebibliography}{10}
\providecommand{\url}[1]{#1}
\csname url@samestyle\endcsname
\providecommand{\newblock}{\relax}
\providecommand{\bibinfo}[2]{#2}
\providecommand{\BIBentrySTDinterwordspacing}{\spaceskip=0pt\relax}
\providecommand{\BIBentryALTinterwordstretchfactor}{4}
\providecommand{\BIBentryALTinterwordspacing}{\spaceskip=\fontdimen2\font plus
\BIBentryALTinterwordstretchfactor\fontdimen3\font minus \fontdimen4\font\relax}
\providecommand{\BIBforeignlanguage}[2]{{%
\expandafter\ifx\csname l@#1\endcsname\relax
\typeout{** WARNING: IEEEtran.bst: No hyphenation pattern has been}%
\typeout{** loaded for the language `#1'. Using the pattern for}%
\typeout{** the default language instead.}%
\else
\language=\csname l@#1\endcsname
\fi
#2}}
\providecommand{\BIBdecl}{\relax}
\BIBdecl

\bibitem{big_data_review}
J.~Singh, G.~Singh, and A.~Verma, ``{The Anatomy of Big Data: Concepts, Principles and Challenges},'' in \emph{2022 8th International Conference on Advanced Computing and Communication Systems (ICACCS)}, vol.~1, 2022, pp. 986--990.

\bibitem{expanded_theoretical_treatment_mm}
M.~W. Jacobson and J.~A. Fessler, ``{An Expanded Theoretical Treatment of Iteration-Dependent Majorize-Minimize Algorithms},'' \emph{IEEE Transactions on Image Processing}, vol.~16, no.~10, pp. 2411--2422, 2007.

\bibitem{mm_review_palomar}
Y.~Sun, P.~Babu, and D.~P. Palomar, ``{Majorization-Minimization Algorithms in Signal Processing, Communications, and Machine Learning},'' \emph{IEEE Transactions on Signal Processing}, vol.~65, no.~3, pp. 794--816, 2017.

\bibitem{MM_alternative_EM}
\BIBentryALTinterwordspacing
T.~T. Wu and K.~Lange, ``{The MM Alternative to EM},'' \emph{Statistical Science}, vol.~25, no.~4, pp. 492 -- 505, 2010. [Online]. Available: \url{https://doi.org/10.1214/08-STS264}
\BIBentrySTDinterwordspacing

\bibitem{EM_vs_MM}
\BIBentryALTinterwordspacing
H.~Zhou and Y.~Zhang, ``{EM vs MM: A case study},'' \emph{Computational Statistics \& Data Analysis}, vol.~56, no.~12, pp. 3909--3920, 2012. [Online]. Available: \url{https://www.sciencedirect.com/science/article/pii/S0167947312002174}
\BIBentrySTDinterwordspacing

\bibitem{concave-convex-procedure}
A.~L. Yuille and A.~Rangarajan, ``{The Concave-Convex Procedure},'' \emph{Neural Computation}, vol.~15, no.~4, pp. 915--936, 2003.

\bibitem{proximal_algorithms}
\BIBentryALTinterwordspacing
N.~Parikh and S.~Boyd, ``{Proximal Algorithms},'' \emph{Foundations and Trends{\textregistered} in Optimization}, vol.~1, no.~3, pp. 127--239, 2014. [Online]. Available: \url{https://doi.org/10.1561/2400000003}
\BIBentrySTDinterwordspacing

\bibitem{block_coordinate_descent}
\BIBentryALTinterwordspacing
S.~Rabanser, L.~Neumann, and M.~Haltmeier, ``{Analysis of the Block Coordinate Descent Method for Linear Ill-Posed Problems},'' \emph{SIAM Journal on Imaging Sciences}, vol.~12, no.~4, pp. 1808--1832, 2019. [Online]. Available: \url{https://doi.org/10.1137/19M1243956}
\BIBentrySTDinterwordspacing

\bibitem{unified_convergence_analysis}
\BIBentryALTinterwordspacing
M.~Razaviyayn, M.~Hong, and Z.-Q. Luo, ``{A Unified Convergence Analysis of Block Successive Minimization Methods for Nonsmooth Optimization},'' \emph{SIAM Journal on Optimization}, vol.~23, no.~2, pp. 1126--1153, 2013. [Online]. Available: \url{https://doi.org/10.1137/120891009}
\BIBentrySTDinterwordspacing

\bibitem{orth_grassmann}
A.~Edelman, T.~A. Arias, and S.~Smith, ``{The Geometry of Algorithms with Orthogonality Constraints},'' \emph{SIAM J. Matrix Anal. Appl.}, vol.~20, pp. 303--353, 1998.

\bibitem{PCA_FA}
\BIBentryALTinterwordspacing
I.~T. Jolliffe, \emph{Principal Component Analysis and Factor Analysis}.\hskip 1em plus 0.5em minus 0.4em\relax New York, NY: Springer New York, 1986, pp. 115--128. [Online]. Available: \url{https://doi.org/10.1007/978-1-4757-1904-8\_7}
\BIBentrySTDinterwordspacing

\bibitem{robust_pca_partial_subspace}
J.~Zhan and N.~Vaswani, ``{Robust PCA With Partial Subspace Knowledge},'' \emph{IEEE Transactions on Signal Processing}, vol.~63, no.~13, pp. 3332--3347, 2015.

\bibitem{grouse}
L.~Balzano, R.~Nowak, and B.~Recht, ``Online identification and tracking of subspaces from highly incomplete information,'' in \emph{2010 48th Annual Allerton Conference on Communication, Control, and Computing (Allerton)}, 2010, pp. 704--711.

\bibitem{apst}
M.~Vilà, C.~A. López, and J.~Riba, ``Affine projection subspace tracking,'' in \emph{ICASSP 2021 - 2021 IEEE International Conference on Acoustics, Speech and Signal Processing (ICASSP)}, 2021, pp. 3705--3709.

\bibitem{fast_stable_st}
X.~G. Doukopoulos and G.~V. Moustakides, ``Fast and stable subspace tracking,'' \emph{IEEE Transactions on Signal Processing}, vol.~56, no.~4, pp. 1452--1465, 2008.

\bibitem{orthogonal_nmf}
S.~Choi, ``Algorithms for orthogonal nonnegative matrix factorization,'' in \emph{2008 IEEE International Joint Conference on Neural Networks (IEEE World Congress on Computational Intelligence)}, 2008, pp. 1828--1832.

\bibitem{riemannian_perspective_matrix}
\BIBentryALTinterwordspacing
K.~Ahn and F.~Suarez, ``{Riemannian Perspective on Matrix Factorization},'' \emph{ArXiv}, vol. abs/2102.00937, 2021. [Online]. Available: \url{https://api.semanticscholar.org/CorpusID:231740389}
\BIBentrySTDinterwordspacing

\bibitem{non_coherent_comms_grassmann}
L.~Zheng and D.~Tse, ``{Communication on the Grassmann manifold: a geometric approach to the noncoherent multiple-antenna channel},'' \emph{IEEE Transactions on Information Theory}, vol.~48, no.~2, pp. 359--383, 2002.

\bibitem{mm_stiefel}
A.~Breloy, S.~Kumar, Y.~Sun, and D.~P. Palomar, ``{Majorization-Minimization on the Stiefel Manifold With Application to Robust Sparse PCA},'' \emph{IEEE Transactions on Signal Processing}, vol.~69, pp. 1507--1520, 2021.

\bibitem{geodesically_quasiconvex}
\BIBentryALTinterwordspacing
T.~Yamaguchi, ``{Locally Geodesically Quasiconvex Functions on Complete Riemannian Manifolds},'' \emph{Transactions of the American Mathematical Society}, vol. 298, no.~1, pp. 307--330, 1986. [Online]. Available: \url{http://www.jstor.org/stable/2000622}
\BIBentrySTDinterwordspacing

\bibitem{dimensionality_reduction_binary_data_natural_parameters}
\BIBentryALTinterwordspacing
A.~J. Landgraf and Y.~Lee, ``Dimensionality reduction for binary data through the projection of natural parameters,'' \emph{Journal of Multivariate Analysis}, vol. 180, p. 104668, 2020. [Online]. Available: \url{https://www.sciencedirect.com/science/article/pii/S0047259X20302499}
\BIBentrySTDinterwordspacing

\bibitem{Truncated_power_method_sparse_eigenvalue}
\BIBentryALTinterwordspacing
X.-T. Yuan and T.~Zhang, ``Truncated power method for sparse eigenvalue problems,'' \emph{J. Mach. Learn. Res.}, vol.~14, pp. 899--925, 2011. [Online]. Available: \url{https://api.semanticscholar.org/CorpusID:938396}
\BIBentrySTDinterwordspacing

\bibitem{Nonconvex_approach_exact_efficient_multichannel}
\BIBentryALTinterwordspacing
Q.~Qu, X.~Li, and Z.~Zhu, ``{A Nonconvex Approach for Exact and Efficient Multichannel Sparse Blind Deconvolution},'' in \emph{Advances in Neural Information Processing Systems}, H.~Wallach, H.~Larochelle, A.~Beygelzimer, F.~d\textquotesingle Alch\'{e}-Buc, E.~Fox, and R.~Garnett, Eds., vol.~32.\hskip 1em plus 0.5em minus 0.4em\relax Curran Associates, Inc., 2019. [Online]. Available: \url{https://proceedings.neurips.cc/paper_files/paper/2019/file/02e656adee09f8394b402d9958389b7d-Paper.pdf}
\BIBentrySTDinterwordspacing

\bibitem{Dual_principal_component_pursuit}
\BIBentryALTinterwordspacing
Z.~Zhu, Y.~Wang, D.~Robinson, D.~Naiman, R.~Vidal, and M.~Tsakiris, ``{Dual Principal Component Pursuit: Improved Analysis and Efficient Algorithms},'' in \emph{Advances in Neural Information Processing Systems}, S.~Bengio, H.~Wallach, H.~Larochelle, K.~Grauman, N.~Cesa-Bianchi, and R.~Garnett, Eds., vol.~31.\hskip 1em plus 0.5em minus 0.4em\relax Curran Associates, Inc., 2018. [Online]. Available: \url{https://proceedings.neurips.cc/paper_files/paper/2018/file/af21d0c97db2e27e13572cbf59eb343d-Paper.pdf}
\BIBentrySTDinterwordspacing

\bibitem{global_geometry_sphere_constrained_sparse}
Y.~Zhang, Y.~Lau, H.-W. Kuo, S.~Cheung, A.~Pasupathy, and J.~Wright, ``{On the Global Geometry of Sphere-Constrained Sparse Blind Deconvolution},'' in \emph{2017 IEEE Conference on Computer Vision and Pattern Recognition (CVPR)}, 2017, pp. 4381--4389.

\bibitem{short_sparse_deconvolution}
\BIBentryALTinterwordspacing
Y.~Lau, Q.~Qu, H.-W. Kuo, P.~Zhou, Y.~Zhang, and J.~Wright, ``{Short-and-Sparse Deconvolution - A Geometric Approach},'' \emph{ArXiv}, vol. abs/1908.10959, 2019. [Online]. Available: \url{https://api.semanticscholar.org/CorpusID:201668203}
\BIBentrySTDinterwordspacing

\bibitem{proximal_gradient_stiefel_manifold}
S.~Chen, S.~Ma, A.~So, and T.~Zhang, ``{Proximal Gradient Method for Nonsmooth Optimization over the Stiefel Manifold},'' \emph{SIAM Journal on Optimization}, vol.~30, pp. 210--239, 01 2020.

\bibitem{convex_geodesic_overview}
\BIBentryALTinterwordspacing
N.~K. Vishnoi, ``Geodesic convex optimization: Differentiation on manifolds, geodesics, and convexity,'' \emph{ArXiv}, vol. abs/1806.06373, 2018. [Online]. Available: \url{https://api.semanticscholar.org/CorpusID:49300656}
\BIBentrySTDinterwordspacing

\bibitem{accelerated_geodesically_convex_opt}
Y.~Liu, F.~Shang, J.~Cheng, H.~Cheng, and L.~Jiao, ``{Accelerated First-order Methods for Geodesically Convex Optimization on Riemannian Manifolds},'' in \emph{Advances in Neural Information Processing Systems}, I.~Guyon, U.~V. Luxburg, S.~Bengio, H.~Wallach, R.~Fergus, S.~Vishwanathan, and R.~Garnett, Eds., vol.~30.\hskip 1em plus 0.5em minus 0.4em\relax Curran Associates, Inc., 2017.

\bibitem{grassmann_handbook}
\BIBentryALTinterwordspacing
T.~Bendokat, R.~Zimmermann, and P.-A. Absil, ``{A Grassmann Manifold Handbook: Basic Geometry and Computational Aspects},'' \emph{ArXiv}, vol. abs/2011.13699, 2020. [Online]. Available: \url{https://api.semanticscholar.org/CorpusID:227209168}
\BIBentrySTDinterwordspacing

\bibitem{grassmannian_learning}
\BIBentryALTinterwordspacing
J.~Zhang, G.~Zhu, R.~W. Heath, and K.~Huang, ``{Grassmannian Learning: Embedding Geometry Awareness in Shallow and Deep Learning},'' \emph{ArXiv}, vol. abs/1808.02229, 2018. [Online]. Available: \url{https://api.semanticscholar.org/CorpusID:51936810}
\BIBentrySTDinterwordspacing

\bibitem{grassmann_unique_geodesic}
\BIBentryALTinterwordspacing
Y.-C. Wong, ``{Sectional Curvatures of Grassmann Manifolds},'' \emph{Proceedings of the National Academy of Sciences of the United States of America}, vol.~60, no.~1, pp. 75--79, 1968. [Online]. Available: \url{http://www.jstor.org/stable/58432}
\BIBentrySTDinterwordspacing

\bibitem{convergence_bcd}
\BIBentryALTinterwordspacing
P.~Tseng, ``{Convergence of a Block Coordinate Descent Method for Nondifferentiable Minimization},'' \emph{Journal of Optimization Theory and Applications}, vol. 109, no.~3, pp. 475--494, Jun. 2001. [Online]. Available: \url{https://doi.org/10.1023/a:1017501703105}
\BIBentrySTDinterwordspacing

\end{thebibliography}
\end{document}